\definecolor{Red}{rgb}{1.00, 0.00, 0.00}
\definecolor{DarkGreen}{rgb}{0.00, 1.00, 0.00}
\definecolor{Blue}{rgb}{0.00, 0.00, 1.00}
\definecolor{Cyan}{rgb}{0.00, 1.00, 1.00}
\definecolor{Magenta}{rgb}{1.00, 0.00, 1.00}
\definecolor{DeepSkyBlue}{rgb}{0.00, 0.75, 1.00}
\definecolor{DarkGreen}{rgb}{0.00, 0.39, 0.00}
\definecolor{SpringGreen}{rgb}{0.00, 1.00, 0.50}
\definecolor{DarkOrange}{rgb}{1.00, 0.55, 0.00}
\definecolor{OrangeRed}{rgb}{1.00, 0.27, 0.00}
\definecolor{DeepPink}{rgb}{1.00, 0.08, 0.57}
\definecolor{DarkViolet}{rgb}{0.58, 0.00, 0.82}
\definecolor{SaddleBrown}{rgb}{0.54, 0.27, 0.07}
\definecolor{Black}{rgb}{0.00, 0.00, 0.00}
\definecolor{dark-magenta}{rgb}{.5,0,.5}
\definecolor{myblack}{rgb}{0,0,0}
\definecolor{darkgray}{gray}{0.5}
\definecolor{lightgray}{gray}{0.75}
\def\refer #1\par{\noindent\hangindent=\parindent\hangafter=1 #1\par}
\theoremstyle{plain}  
\newtheorem{theorem}{Theorem}
\newtheorem{proposition}{Proposition}
\newtheorem{lemma}{Lemma}
\newtheorem{corollary}{Corollary}
\theoremstyle{definition}
\begin{document}


\title{The fiberwise intersection theory}
\author{Gun Sunyeekhan}
\date\today
\maketitle

\parindent0in
\parskip0.1in


%

\begin{abstract}

We define a bordism invariant for  the fiberwise intersection theory. Under some certain conditions, this invariant is an obstruction for the theory. 
\end{abstract}

\setcounter{equation}{0}
 
\section{Introduction}

We start with the following assumptions for the intersection theory;

\begin{itemize}
\item[{\it(i)}] Let $P^p$ and $M^m$ be smooth manifolds.  Suppose $Q^{q} \subseteq  M^{m}$ is a closed submanifold and $f:P\to M$ smooth map such that $f \pitchfork  Q$ in $M$.

\item[{\it(ii)}]  Let $E(f,i_Q) :=$ the homotopy pullback of  $[P\stackrel{f}{\to} M \stackrel{i_Q}{\hookleftarrow} Q]$.
We have a smooth map $f^{-1}(Q) \to E(f,i_Q)$ and the bundle data determines an element in $\Omega^{fr}_{p+q-m}(E(f,i_Q)).$  

\item[{\it(iii)}] Let $N^{p+q-m} $ be a submanifold of $P$ and let $N \to E(f,i_Q)$ be a map which represents the same such an element in $\Omega^{fr}_{p+q-m}(E(f,i_Q)).$  
\end{itemize}

In 1974, Allan Hatcher and Frank Quinn \cite{H-Q}  showed in their work that if $f$ is an immersion and assume $\displaystyle m> q +\frac{p}{2} +1, m> p +\frac{q}{2} +1$, then we can homotope a map $f$ the the new map $g$  so that  $g^{-1}(Q) = N$. We develop their result to the case where $f$ is any smooth map and also weaken the dimension condition as follows; (See \cite{Gsun} for more details.)

\begin{theorem}{(Classical version)}
Given a smooth map $f:P^p \to M^m$ and $Q^q$ is a closed submanifold of $M$.  Assume  $\displaystyle m> q +\frac{p}{2} +1$. Then there is a map $g$ homotopic to $f$ such that $g^{-1}(Q) = N$.
\end{theorem}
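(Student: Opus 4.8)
The plan is to reduce the theorem to the construction of a single homotopy $F\colon P\times I\to M$ that is transverse to $Q$ and has a prescribed preimage, and then to build $F$ out of the geometric bordism supplied by hypothesis $(iii)$.

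\emph{Reduction.} Suppose we can produce $F\colon P\times I\to M$ with $F|_{P\times\{0\}}=f$, with $F\pitchfork Q$, and with $F^{-1}(Q)=W$, where $W^{p+q-m+1}\subset P\times I$ is a compact submanifold meeting $P\times\{0,1\}$ collarwise with $\partial W=(f^{-1}(Q)\times\{0\})\sqcup(N\times\{1\})$. Then $g:=F|_{P\times\{1\}}$ is homotopic to $f$ through $F$, is transverse to $Q$, and satisfies $g^{-1}(Q)=W\cap(P\times\{1\})=N$, as required. By hypothesis $(iii)$ the classes of $f^{-1}(Q)$ and of $N$ in $\Omega^{fr}_{p+q-m}(E(f,i_Q))$ agree, so such a $W$ exists \emph{abstractly}, equipped with a reference map $w\colon W\to E(f,i_Q)$ restricting to the two given maps on $\partial W$ and carrying the prescribed framing/bundle data.

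\emph{Realizing $W$ in $P\times I$.} Compose $w$ with $E(f,i_Q)\to P$ and with a collar-compatible height function $W\to I$ to get a map $W\to P\times I$ that is the inclusion on $\partial W$. Since $m>q+\tfrac p2+1$ we have $2\dim W=2(p+q-m+1)<p+1=\dim(P\times I)$, so relative general position perturbs this map, rel its already embedded ends, to an embedding $W\hookrightarrow P\times I$; the same inequality, with room to spare, makes the embedding unique up to isotopy rel ends and — the normal bundle having rank $m-q>\dim W$, hence being determined by its stable class — lets us identify that normal bundle with $w^{*}\xi$, where $\xi\to E(f,i_Q)$ is the rank $m-q$ bundle pulled back from $\nu(Q\subset M)$, compatibly with the identifications already present over the two ends. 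Fix a tubular neighbourhood $\mathcal U\cong w^{*}\xi$ of $W$ restricting to tubular neighbourhoods $U\subset P\times\{0\}$ of $f^{-1}(Q)$ and $U'\subset P\times\{1\}$ of $N$.

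\emph{The homotopy, and the main obstacle.} Define $F$ on $\mathcal U$ as the composite $\mathcal U\cong w^{*}\xi\to\xi\hookrightarrow(\text{tubular neighbourhood of }Q)\hookrightarrow M$, the middle arrow covering $W\xrightarrow{w}E(f,i_Q)\to Q$; this is transverse to $Q$ with preimage the zero section $W$, and, after matching the identification $\mathcal U\cong w^{*}\xi$ over the bottom end with the local form of $f$ near $f^{-1}(Q)$ (possible because $w$ there is the canonical map $f^{-1}(Q)\to E(f,i_Q)$ lying over $f$), it agrees with $f$ near $f^{-1}(Q)\times\{0\}$. It remains to extend $F$ over $Z:=\overline{(P\times I)\setminus\mathcal U}$ into $M\setminus Q$, equal to $f$ on $(P\setminus U)\times\{0\}$ (which misses $Q$ since $f^{-1}(Q)\subset U$) and to the sphere-bundle part of the above map on $\partial\mathcal U$ (which lies in the punctured tubular neighbourhood of $Q$). \textbf{This last extension is the hard part.} I expect it to go through as follows: $Z$ admits a handle decomposition relative to $((P\setminus U)\times\{0\})\cup\partial\mathcal U$ whose handles are dual to those of $\mathcal U$ rel $f^{-1}(Q)\times\{0\}$, hence of index between $m-q$ and $p+1$; since $M\setminus Q\hookrightarrow M$ is $(m-q-1)$-connected and the aggregate obstruction to the desired extension is carried by the difference of the bordism classes of $f^{-1}(Q)$ and $N$ in $\Omega^{fr}_{p+q-m}(E(f,i_Q))$, which vanishes by $(iii)$, the extension exists precisely in the range $m>q+\tfrac p2+1$. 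Granting this, $F$ is the required homotopy and $g=F|_{P\times\{1\}}$ finishes the proof; what remains is the routine bookkeeping of the tubular-neighbourhood and bundle-data choices at the two ends. The dimension hypothesis is used only here and in the embedding of $W$; that a single inequality suffices and that $f$ may be arbitrary (rather than an immersion, as in \cite{H-Q}) is because only the preimage $f^{-1}(Q)$, and none of the self-intersection data of $f$, enters the construction.
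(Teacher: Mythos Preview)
Your overall architecture---embed the bordism $W$, identify its normal bundle via the framing data, define the homotopy on a tube, then fill in the rest---is the same as the paper's. The difference, and the place where your argument has a real gap, is the ``hard part'' you flag at the end.

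The sketched obstruction argument does not work as written. First, the sentence ``the aggregate obstruction \ldots\ is carried by the difference of the bordism classes \ldots\ which vanishes by $(iii)$'' is circular: the vanishing of that difference is exactly what produced $W$ in the first place, and it cannot be invoked a second time as the obstruction to extending $F$ over $Z$. Second, and more importantly, you have only used two of the three components of $w\colon W\to E(f,i_Q)$. Recall that a point of $E(f,i_Q)$ is a triple $(x,\lambda,y)$ with $\lambda$ a path in $M$ from $f(x)$ to $y\in Q$; your construction uses $\pi_P\circ w$ (to embed $W$) and $\pi_Q\circ w$ (to define $F$ on $\mathcal U$), but never the path $\lambda$. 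Without it there is no reason the map $F|_{\mathcal U}$, which on the core $W$ is $b:=\pi_Q\circ w\colon W\to Q\subset M$, should be homotopic rel the bottom end to $f\circ\mathrm{pr}_P$, and hence no reason the map on $(P\times\{0\})\cup\mathcal U$ extends to $P\times I$ at all, let alone missing $Q$ on the complement. The connectivity of $(M,M\setminus Q)$ alone does not bridge this: you need an extension into $M$ before you can start pushing it off $Q$.

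The paper supplies exactly this missing piece. Writing $a=\pi_P\circ w$ and $b=\pi_Q\circ w$, the evaluation homotopy $K\colon E(f,i_Q)\times I\to M$ gives $H:=K\circ(w\times\mathrm{id})\colon W\times I\to M$ with $H_0=f\circ a$ and $H_1=i_Q\circ b$; this is then thickened to the tubular neighbourhood. Concretely (in the fibrewise proof, which specialises to the classical one by taking $B$ a point), the paper first embeds $W$ into $P$ itself via $\mathcal A\simeq a$ rel $\partial W$ (this is where the full hypothesis $m>q+\tfrac{p}{2}+1$ is used, via Lemma~\ref{relemb}), then glues $P\times[0,1]$ (carrying $f$) to a tube $D(\nu(\mathcal A))\times[1,2]$ (carrying the thickened $H$) and a further tube over $[2,3]$ (realising the tubular map into $\nu_{Q\subset M}$), and finally observes that this union is diffeomorphic to $P\times I$. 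No extension over a complement is ever needed; the path data does the work you were trying to do by obstruction theory. If you insert the homotopy $H$ into your construction, your version can be made to go through as well, and with $W\hookrightarrow P\times I$ rather than $W\hookrightarrow P$ you even use a hair less of the dimension hypothesis at the embedding step.
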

 
Our proof is using the bundle data to construct the required homotopy step by step.  In this paper, we proceed along the same lines as the proof of the classical case to get the result for the fiberwise case which we will describe it in the next section. 


\section{Intersection Theory. (Fiberwise version)}

{\bf (I)}     Suppose that $E^{p+k}_{P}, E^{q+k}_{Q}$ and $E^{m+k}_{M}$ are smooth fiber bundles over a compact  manifold $B^{k}$. Let  $f:E^{p+k}_{P} \to E^{m+k}_{M}$  be a bundle map and $E_{Q}$ be a subbundle  of $E_{M}$ with the inclusion bundle map $i_{Q}: E_{Q} \hookrightarrow E_{M}$.

We have a commutative diagram
\begin{equation}
\xymatrix{ P^{p}\ar[d] & M^{m} \ar[d] & Q^{q}\ar[d] \\ E_{P} \ar[r]^{f} \ar[dr]_{pr_{P}} &E_{M}\ar[d]_<<<{pr_{M}} &E_{Q} \ar@{_{(}->}[l]_{i_{Q}} \ar[dl]^{pr_{Q}} \\ &B& }
\end{equation}

where $P,Q$ and $M$ are the fibers of $pr_{P},pr_{Q}$ and $pr_{M}$, respectively.

We may  assume that $f\pitchfork E_{Q}$ in $E_{M}$ ( See \cite{Koz}).

The homotopy pullback  is 

$E(f,i_{Q}) := \{ (x,\lambda,y) \in E_{P} \times E_{M}^{I} \times E_{Q} \  | \  \lambda(0) = f(x) , \lambda(1) = y\}$.

We have a diagram which commutes up to homotopy

\begin{equation}
\xymatrix{E(f,i_{Q}) \ar[r]^{\pi_Q} \ar[d]_{\pi_P} & E_{Q} \ar@{^{(}->}[d]_{i_{Q}} \\ E_{P} \ar[r]^f &E_{M}}
\end{equation}

where $\pi _{P}$ and $\pi_{Q}$  are the trivial projections, i.e. we have a homotpy


$E(f,i_Q)\times I  \stackrel{K}{\to} E_M$ defined by $K(x,\lambda,y,t) = \lambda(t)$.

We also  have a map
         $c:f^{-1}(E_{Q}) \to E(f,i_{Q})$ defined by  $x\mapsto (x,c_{f(x)},f(x))$ 
         
         where $c_{f(x)} =$ constant path in $E_M$ at $f(x).$ Note that $E(f,i_{Q})$ and $f^{-1}(E_{Q})$ are not necessarily the fiber bundles over $B$.
         
   
 Transversality yields a bundle map 
   
\begin{equation}
\xymatrix{\nu_{f^{-1}(E_{Q}) \subseteq E_{P}}\ar[r] \ar[d] &\nu_{E_{Q} \subseteq E_{M}} \ar[d] \\ f^{-1}(E_{Q}) \ar[r]^{f_{|f^{-1}(E_{Q})}} &E_{Q}}
\end{equation}
   
   Choose an embedding $\displaystyle E_{P}^{p+k} \subseteq S^{p+k+l}$, for sufficiently large $l$. Then we get   $f^{-1}(E_{Q}) \stackrel{i}{\hookrightarrow} E_{P}^{p+k} \subseteq S^{p+k+l}$. So  $\nu _{f^{-1}(E_{Q}) \subseteq S^{p+k+l}}  \cong \nu_{f^{-1}(E_{Q}) \subseteq E_{P}} \oplus i^{*} \nu_{E_{P} \subseteq S^{p+k+l}}$.

The commutative diagram

\begin{equation}
\xymatrix{f^{-1}(E_{Q}) \ar@/^1pc/[1,2]^{f_{|f^{-1}(E_{Q})}} \ar[1,1]^{c} \ar@/_1pc/[2,1]_{i} & & \\  & E(f,i_Q) \ar[r]^{\pi_{Q}} \ar[d]_{\pi_{P}} & E_Q \\  & E_P  & }
\end{equation}

yields a bundle map

\begin{equation}
\xymatrix{\nu_{f^{-1}(E_{Q}) \subseteq S^{p+k+l}}  \ar[d] \ar[r]^<<<<{\hat{c}}   & \pi^*(\nu_{E_{Q}\subseteq E_{M}}) \oplus \pi^*(\nu_{E_{P}\subseteq S^{p+k+l}}) := \xi \ar[d]  \\ f^{-1}(E_{Q}) \ar[r]^{c} & E(f,i_{Q}) }
\end{equation}

Thus $(c,\hat{c})$ determines an element  $[c,\hat{c}] \in \Omega^{fr}_{p+q+k-m}(E(f,i_{Q});  \xi)$.\\

{\bf (II)}  Suppose that  $(N \stackrel{c_{1}}{\to}E(f,i_Q), \nu_{N\subseteq S^{p+k+l}} \stackrel{\hat{c_1}}{\to} \xi)$ is another representative of  $[c,\hat{c}]$, where $N^{p+q+k-m} \subseteq E_P^{p+k} \subseteq S^{p+k+l}$. This means we have a normal bordism\\
 $(W \stackrel{\mathcal{C}}{\to} E(f,i_Q) , \nu_{W} \stackrel{\hat{\mathcal{C}}}{\to} \xi )$ between $(c,\hat{c})$ and $(c_{1}, \hat{c_{1}})$, i.e.  
 \begin{itemize}
 \item[({\it i})]$W^{p+q+k-m+1} \subseteq (S^{p+k+l} \times I)$,
 \item[({\it ii})] $\partial W \subseteq (S^{p+k+l} \times \partial I)$,
 \item[({\it iii})] $W \pitchfork ( S^{p+k+l} \times \partial I)$,
 \item[({\it iv})]$W \cap (S^{p+k+l} \times 0) = f^{-1}(E_Q)$ and $W \cap (S^{p+k+l} \times 1) = N $
 \end{itemize}

 such that 
 
 $\mathcal{C}_{| f^{-1}(E_Q)} =  c :f^{-1}(E_Q) \to E(f,i_{Q})\ \ , \ \ \mathcal{C}_{|N} = c_{1} : N \to E(f,i_Q)$
 
 $\hat{\mathcal{C}}_{|\nu_{f^{-1}(E_Q) \subseteq S^{p+k+l}}}= \hat{c}\hspace{3.1cm} , \ \ \hat{\mathcal{C}}_{|\nu_{N \subseteq S^{p+k+l}}} = \hat{c_{1}}.$\\
 






\begin{theorem}
\label{whitney}
Let $M^{m} $ and $N^n$ be smooth manifolds and $f:M \to N$ be a smooth map.
If $n > 2m$, then $f$ is homotopic to an embedding $g:M \to N$.
\end{theorem}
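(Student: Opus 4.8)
The plan is to reproduce, in a way that stays inside the homotopy class of $f$, the classical fact that a generic smooth map into a manifold of more than twice the domain dimension is an embedding. I would break the argument into two successive $C^{\infty}$-small perturbations of $f$: first make $f$ an immersion, then make the immersion injective. Since each perturbation can be taken arbitrarily small, the perturbed map is homotopic to the previous one, hence to $f$; and at the end one invokes that an injective immersion of a compact manifold is an embedding. (I will tacitly take $M$ compact; the non-compact case needs, in addition, a properness argument via an exhausting function, which is the only substantial extra ingredient.)

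\textbf{Step 1: homotope $f$ to an immersion.} Work in the $1$-jet space $J^{1}(M,N)$ and its corank strata $\Sigma_{r}\subseteq J^{1}(M,N)$, which are submanifolds with $\operatorname{codim}\Sigma_{r}=r(n-m+r)$. Since $n>2m$ we have $\operatorname{codim}\Sigma_{1}=n-m+1\ge m+2>\dim M$, and a fortiori $\operatorname{codim}\Sigma_{r}>\dim M$ for every $r\ge1$. By the Thom jet transversality theorem there is a map $f_{1}$ arbitrarily close to $f$ whose jet section $j^{1}f_{1}$ is transverse to all $\Sigma_{r}$; the codimension count then forces $j^{1}f_{1}$ to miss $\bigcup_{r\ge1}\Sigma_{r}$, i.e. $f_{1}$ is an immersion. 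As $f_{1}$ is close to $f$ it is homotopic to $f$ (for instance by the geodesic homotopy for a metric on $N$, taking the approximation finer than the injectivity radius).

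\textbf{Step 2: remove double points.} Pass to the two-fold multijet $0$-jet space ${}_{2}J^{0}(M,N)$, whose points are quadruples $(x,y,u,v)$ with $x\ne y$ in $M$ and $u,v\in N$, and consider the submanifold $\Delta=\{(x,y,u,v):u=v\}$, of codimension $n$. The preimage of $\Delta$ under the multijet section ${}_{2}j^{0}f_{1}$ is exactly the double-point locus $D=\{(x,y):x\ne y,\ f_{1}(x)=f_{1}(y)\}$. By the multijet transversality theorem one may choose $f_{2}$ arbitrarily close to $f_{1}$ with ${}_{2}j^{0}f_{2}\pitchfork\Delta$; since $\dim\{(x,y):x\ne y\}=2m<n$, transversality gives $D=\emptyset$, i.e. $f_{2}$ is injective off the diagonal. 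Because immersions form an open set when $M$ is compact, a small enough $f_{2}$ is still an immersion, hence locally injective; together with injectivity off the diagonal this forces $f_{2}$ to be globally injective, and $f_{2}\simeq f_{1}\simeq f$ as before.

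\textbf{Conclusion and the main difficulty.} Setting $g=f_{2}$, an injective immersion of a compact manifold is a homeomorphism onto its image, so $g$ is an embedding homotopic to $f$, as desired. The step I expect to be most delicate is Step 2: one must produce a \emph{single} global small perturbation of $f_{1}$ that simultaneously annihilates all double points and preserves the immersion property — this is precisely what the multijet transversality theorem provides, but it must be applied with some care — and, if the compactness hypothesis on $M$ is dropped, the passage from ``injective immersion'' to ``embedding'' requires the standard but nontrivial argument with a proper exhausting function.
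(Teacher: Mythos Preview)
Your sketch is correct and is essentially the standard transversality proof of the Whitney embedding theorem in the range $n>2m$: first perturb to an immersion via jet transversality with respect to the corank strata, then kill double points via multijet transversality with respect to the diagonal, and finally use that an injective immersion of a compact manifold is an embedding. Your codimension counts are right, and you correctly flag the compactness hypothesis (which the statement does not make) and what its removal would cost.

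There is, however, nothing to compare against: the paper does not prove this theorem at all but simply cites \cite{Muk}. So your proposal is not a different route from the paper's --- it is \emph{a} route where the paper offers none. What you have written is exactly the kind of argument one would find behind such a citation; if anything, you have supplied more than the paper does.
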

\begin{proof} 
See \cite{Muk}.
\end{proof}

\begin{lemma}
\label{relemb}
 Let $f:M^m \to N^n$ be a map between two smooth manifolds. Let $A$ be a closed submanifold of $M$. Assume that $f_{|A}$ is an embedding.

If $n > 2m$, then $f $ is homotopic to an embedding $ g$ relative to $A$.
\end{lemma}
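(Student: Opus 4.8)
The plan is to imitate the proof of Theorem~\ref{whitney} one step at a time, carrying out every construction relative to $A$. We may assume that $M$ is compact (the case occurring in the applications below) and, after a preliminary approximation rel $A$, that $f$ is smooth; with $M$ compact it then suffices to construct an injective immersion homotopic to $f$ rel $A$, since such a map is automatically an embedding. The strict inequality $n>2m$ is exactly what will make every general-position argument below go through by a bare dimension count, with no Whitney trick needed.

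\textbf{Step 1: embedding a neighbourhood of $A$.} Since $f|_A$ is an embedding, $f(A)$ is a closed submanifold of $N$; write $a=\dim A$ and let $\nu$ and $\nu'$ be the normal bundles of $A$ in $M$ and of $f(A)$ in $N$, of ranks $m-a$ and $n-a$. The first task is to produce a fibrewise-injective bundle map $\phi\colon \nu\to (f|_A)^{*}\nu'$ over $f|_A$. The obstructions to such a $\phi$ lie in the groups $H^{j+1}(A;\pi_j(V_{m-a}(\rr^{n-a})))$, where $V_{m-a}(\rr^{n-a})$ is the Stiefel manifold of $(m-a)$-frames in $\rr^{n-a}$; this space is $(n-m-1)$-connected, and $n-m-1\ge m\ge a$ because $n\ge 2m+1$, so all these groups vanish and $\phi$ exists. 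Identifying a tubular neighbourhood $U$ of $A$ in $M$ with a disc bundle of $\nu$ and a tubular neighbourhood of $f(A)$ in $N$ with a disc bundle of $\nu'$, the map $\phi$ restricted to a small enough sub-disc-bundle defines an embedding $g_0\colon U\to N$ with $g_0|_A=f|_A$. Since $U$ deformation-retracts onto $A$ rel $A$, both $g_0$ and $f|_U$ are homotopic rel $A$ to $(f|_A)\circ r$, where $r\colon U\to A$ is the retraction, and hence to each other; extending this homotopy over $M$ by the homotopy extension property yields a map $f_1\simeq f$ rel $A$ with $f_1|_U=g_0$ an embedding.

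\textbf{Step 2: removing double points away from $A$.} Choose nested compact neighbourhoods $A\subseteq V_1\subseteq V_2\subseteq U$ that are smooth codimension-zero submanifolds. I would first take a generic perturbation of $f_1$, fixed on $V_1$ and supported in $M\setminus\operatorname{int}V_1$, making it an immersion on the compact manifold-with-boundary $M\setminus\operatorname{int}V_1$; near $\partial V_1$ it is already an immersion (it equals $g_0$ there) and immersion is a $C^1$-open condition, so the result $f_2$ is an immersion on all of $M$, equals $f_1$ on $V_1$, and (being $C^0$-close to $f_1$ off $V_1$) is homotopic to $f_1$ rel $V_1\supseteq A$. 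I would then take a further $C^1$-small generic perturbation of $f_2$, supported in $M\setminus\operatorname{int}V_1$, so that $f_2\times f_2$ is transverse to the diagonal of $N\times N$ away from the diagonal of $M\times M$ and so that $f_2|_{M\setminus\operatorname{int}V_2}$ is transverse to the embedded submanifold $f_1(V_1)$. Since $2m-n<0$ and $m+m-n<0$, both the double-point set of $f_2$ and the intersection $f_2(M\setminus\operatorname{int}V_2)\cap f_1(V_1)$ are empty; any remaining coincidence must lie over the collar $V_2\setminus V_1$, which is disjoint from $A$, and is killed by the same transversality. The resulting map $g$ is an injective immersion of the compact manifold $M$ with $g|_A=f|_A$, hence an embedding, and the concatenation of the homotopies above is a homotopy $f\simeq g$ rel $A$.

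\textbf{Main obstacle.} The substantive step is Step~1, the passage from ``$f|_A$ is an embedding'' to ``$f$ is an embedding on a neighbourhood of $A$'', and this is precisely where the hypothesis $n>2m$ is used, through the connectivity of the Stiefel manifold $V_{m-a}(\rr^{n-a})$. Once a neighbourhood of $A$ is embedded, Step~2 is routine relative transversality; the only point requiring care there is the bookkeeping with $V_1\subseteq V_2$, so that a perturbation carried out far from $A$ cannot recreate intersections near $A$. A secondary point to keep honest is the compactness (or properness) assumption, which is what allows an injective immersion to be promoted to an embedding.
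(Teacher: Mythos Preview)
Your argument is correct and follows the same two-step architecture as the paper: first thicken the embedding $f|_A$ to an embedding of a tubular neighbourhood $T$ of $A$, then handle the complement. The differences are in execution rather than strategy. In Step~1 the paper obtains the bundle monomorphism $\nu(A,M)\hookrightarrow\nu(f|_A)$ by observing that $i^{*}\nu(f)\oplus\nu(A,M)$ is \emph{stably} isomorphic to $\nu(f|_A)$ and then invoking the stable range $n-a>a$ to destabilise; your obstruction-theoretic argument via the connectivity of $V_{m-a}(\rr^{n-a})$ is an equivalent way to reach the same conclusion, and your use of the homotopy extension property to replace $f$ by $f_1$ with $f_1|_U=g_0$ makes explicit a step the paper leaves implicit. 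In Step~2 the paper simply applies the Whitney embedding theorem (its Theorem~\ref{whitney}) to $f|_{M\setminus\operatorname{int}T}$ and glues, without discussing why the two embedded pieces do not meet each other; your nested-neighbourhood bookkeeping $V_1\subseteq V_2\subseteq U$ and the transversality of $f_2|_{M\setminus V_2}$ to $f_1(V_1)$ address exactly that point and make the gluing honest. Your added compactness hypothesis is harmless for the intended application and is implicitly present in the paper's argument as well.
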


\begin{proof}
  Let $T$ be a tubular neighborhood of $A$ in $M$.

{\bf Step I} Extend the embedding $f_{|A}:A \to N$ to an embedding $f_T:T \to N$ 

Let $\nu(A,M) $be the normal bundle of $A$ in $M$ and $D(\nu)$ denote the disc bundle of $\nu$. Then the tubular neighborhood theorem implies  $D(\nu(A,M)) \cong T$.

{\bf Claim :} For any given an embedding $A \stackrel{g}{\hookrightarrow} N$ and a vector bundle $\eta$ over $A$. Then

\begin{center}
$\bigg($ $g$ extends to an embedding of $D(\eta)$ into $N \bigg)$  

$\Longleftrightarrow$ $\bigg( $There exists a bundle monomorphism $\phi: \eta \to \nu(g) \bigg)$ 
 \end{center}
 
 where $\nu(g)$ is the normal bundle of $A$ in $N$ via $g$.

{\bf Proof Claim}  

$(\Leftarrow)$  We have a diagram 

\begin{equation}
\xymatrix{D(\eta) \ar[r]^{\phi} \ar[1,1] &D(\nu(g)) \ar[d]^{exp} &{\text{ zero section}}  \ar@{_{(}->}[l] \ar[d]^{\cong} \\ &N&A \ar@{_{(}->}[l]}
\end{equation}

where $exp$ is the exponential map.

Note that $exp(D(\nu(g)) \cong $ tubular neighborhood of $A$ in $N$ via $g$.
Then $exp \circ \phi : D(\eta) \hookrightarrow N$ is a desired embedding. 

$(\Rightarrow)$  Assume there exists an embedding $g_T$ so that  the following diagram commutes

\begin{equation}
\xymatrix{D(\eta) \ar[r]^{g_T} &N\\  A \ar@{_{(}->}[u]^i \ar[ur]_g&}
\end{equation}
 
 Then $\nu(g) \cong \eta \oplus i^* \nu(g_T)$.\\
 
 We are in the situation that we have a commutative diagram
 
 \begin{equation}
 \xymatrix{ A\ar[dr]^{f_{|A} = g} \ar@{^{(}->}[d]_i &\\ M \ar[r]_f &N}
 \end{equation}
 
 Let $\nu(f) := f^*\tau_N - \tau_M$. Then $i^*(\nu(f)) \oplus \nu(A,M) \stackrel{stable}{\cong} \nu(g)$.
 
 If $n -a > a$, then $i^*(\nu(f)) \oplus \nu(A,M) \cong \nu(g)$, so there exists a bundle monomorphism 
 
 \begin{equation}
 \xymatrix{ \nu(A,M) \ar[rr] \ar[dr] &&\nu(g) \ar[dl] \\& A&}
 \end{equation}

 Apply Claim when $g = f_{|A}$ and $\eta = \nu(A,M)$, then we have an extension embedding of $g$ from $D(\nu(A,M)) \cong T \stackrel{f_T}{\hookrightarrow} N$.\\
 
 {\bf Step 2} We have a map $f_{|M - int T} :M \setminus int(T) \to N$ and $\partial(M \setminus int(T)) = \partial T$.

$\displaystyle b > (c+\frac{a}{2} +1)$ and Theorem \ref{whitney}$ \Rightarrow$ $f_{|M - int(T)}$ is homotopic to an embedding $g_{M -int(T)}$.

Define $g:M \to N$ by  

\begin{equation*}
g(x) = \left\{
\begin{array}{rl}
g_{M-int(T)}(x)& \text{if } x \in M\setminus int(T)\\
g_T(x) & \text{if } x\in T.
\end{array} \right.
\end{equation*}

Then $f$ is homotopic to $g$ relative to $A$.

\end{proof}
\newpage

\begin{theorem} 
\label{main}
  Assume $\displaystyle m > q + (\frac{p+k}{2}) +1$. Then there exists a smooth map over $B$
  
  $$\Psi:E_P\times I \to E_M$$
  
 such that $\Psi_{|E_P\times \{0\}} = f$,  $\Psi \pitchfork E_Q$ and $\Psi_{|E_P\times \{1\}}^{-1}(E_Q) = N$.
 
  \end{theorem}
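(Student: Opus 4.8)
The plan is to imitate the classical argument (Theorem~1.1) in the fiberwise setting, building the homotopy $\Psi$ over $B$ in stages guided by the normal bordism data $(W,\mathcal{C},\hat{\mathcal{C}})$ produced in part {\bf(II)}. The rough strategy: start from $f$, whose zero set over $E_Q$ is $f^{-1}(E_Q)$; the normal bordism $W\subseteq S^{p+k+l}\times I$ from $f^{-1}(E_Q)$ to $N$ is the blueprint for how the preimage should evolve. I would like to realize $W$ (after suitable embedding/compression) as the ``trace'' of a homotopy $\Psi$, so that $\Psi^{-1}(E_Q)$ sweeps out exactly $W$ and ends at $N$.

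First I would set up the ambient picture: using the embedding $E_P^{p+k}\subseteq S^{p+k+l}$, consider $E_P\times I$ and the bordism $W$ sitting inside $S^{p+k+l}\times I$ with $\partial W=f^{-1}(E_Q)\sqcup N$. The dimension hypothesis $m>q+\tfrac{p+k}{2}+1$ is precisely what makes Lemma~\ref{relemb} applicable: with $a=\dim W=p+q+k-m+1$ and the relevant codimension $n-a$ in the total space (where the target dimension involves $m+k$), the inequality $n>2a$ (equivalently $b>c+\tfrac a2+1$ in the notation of Lemma~\ref{relemb}) lets us push $W$, rel its boundary $f^{-1}(E_Q)$, into a nice position inside $E_P\times I$ over $B$ and simultaneously compress the classifying map $\mathcal{C}:W\to E(f,i_Q)$. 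Concretely I would want $W\hookrightarrow E_P\times I$ to be an embedding fiberwise over $B$, extending the given inclusion of $f^{-1}(E_Q)$ in $E_P\times\{0\}$ and with $N$ landing in $E_P\times\{1\}$.

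Next, having positioned $W$, I would use its normal bundle data $\hat{\mathcal{C}}$ together with the bundle $\xi=\pi^*(\nu_{E_Q\subseteq E_M})\oplus\pi^*(\nu_{E_P\subseteq S^{p+k+l}})$ to build a tubular neighborhood of $W$ in $E_P\times I$ mapped fiberwise to a tubular neighborhood of $E_Q$ in $E_M$. This is the step where the homotopy $\Psi$ is actually defined: on the tube around $W$, $\Psi$ is the composite of the tubular-neighborhood projection with the bundle map to $\nu_{E_Q\subseteq E_M}\subseteq E_M$, arranged so that the zero section (i.e.\ $\Psi^{-1}(E_Q)$) is exactly $W$; outside the tube, $\Psi$ is a perturbation of the constant-in-$I$ map $f\circ\mathrm{pr}$, pushed off $E_Q$. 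Transversality of $\Psi$ to $E_Q$ is then automatic from the tubular-neighborhood construction, and $\Psi|_{E_P\times\{0\}}$ is homotopic (indeed, arranged equal) to $f$, while $\Psi|_{E_P\times\{1\}}^{-1}(E_Q)=W\cap(S^{p+k+l}\times 1)=N$ by property {\it(iv)} of the bordism. One must check that everything is done fiberwise over $B$, which is where the compactness of $B$ and the bundle structure of $E_P,E_Q,E_M$ enter; the Hatcher--Quinn/Koziniec transversality (\cite{Koz}) guarantees we may keep $f\pitchfork E_Q$ throughout.

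The main obstacle I anticipate is the embedding/compression step: ensuring that $W$, which a priori lives only in $S^{p+k+l}\times I$, can be isotoped into $E_P\times I$ \emph{fiberwise over $B$} and rel $f^{-1}(E_Q)$, while keeping the classifying map to $E(f,i_Q)$ controlled. The naive application of Lemma~\ref{relemb} embeds $W$ into the large sphere, not into $E_P\times I$; bridging this gap requires the bundle monomorphism supplied by the stable normal data (the splitting of $\nu_{W\subseteq S^{p+k+l}\times I}$ through $\xi$) and the dimension count to upgrade the stable iso to an honest one, exactly as in the Claim inside the proof of Lemma~\ref{relemb}. Handling the corners of $W$ along $S^{p+k+l}\times\partial I$ and matching the two tubular neighborhoods (of $f^{-1}(E_Q)$ in $E_P\times\{0\}$ coming from $f$, and of $W$ in $E_P\times I$) smoothly is the other delicate point, and I expect this is where most of the technical work lies.
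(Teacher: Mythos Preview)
Your outline is essentially the paper's three--step strategy: embed the bordism $W$ using Lemma~\ref{relemb} and the dimension hypothesis, upgrade the stable normal data $\hat{\mathcal{C}}$ to an honest bundle isomorphism with $b'^*(\nu_{E_Q\subseteq E_M})$, and then build $\Psi$ from the resulting tubular map. The one implementation difference is the target of the embedding. You propose embedding $W$ directly into $E_P\times I$ with $f^{-1}(E_Q)$ at time $0$ and $N$ at time $1$; the paper instead first embeds $W$ into $E_P$ itself via $\mathcal{A}\simeq \pi_P\circ\mathcal{C}$ rel $\partial W$ (so both boundary pieces sit in the same copy of $E_P$), obtains $\nu(\mathcal{A})\oplus\epsilon^1\cong b'^*(\nu_{E_Q\subseteq E_M})$, and only later re-embeds $W$ in a thickened disk bundle $D_2\times[2,3]$ to supply the time coordinate. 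Your version is arguably cleaner (the two ends of $\partial W$ are automatically disjoint), and the normal bundle of your embedding is exactly $\nu(\mathcal{A})\oplus\epsilon^1$, so Step~2 goes through unchanged.

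Where you should look at the paper is the extension step. Your sentence ``outside the tube, $\Psi$ is a perturbation of $f\circ\mathrm{pr}$, pushed off $E_Q$'' does not patch with the tubular map on $\partial(\text{tube})$ for $t>0$: there is no reason the bundle map $D(\nu)\to D(\nu_{E_Q\subseteq E_M})\subseteq E_M$ agrees with $f$ along the sphere bundle. The paper resolves exactly this matching problem (the one you correctly flag as ``the other delicate point'') by assembling $\Psi$ from three pieces---$f$ on $E_P\times[0,1]$, an interpolating homotopy $\hat\Psi_1$ on $D_1\times[1,2]$ from $f|_{D_1}$ to the tubular model, and the tubular map $\hat\Psi_2$ on $D_2\times[2,3]$---and then observing that $E_P\times[0,1]\cup D_1\times[1,2]\cup D_2\times[2,3]$ is diffeomorphic to $E_P\times I$. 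The paper also inserts one extra move you only gesture at: the homotopy lifting property for $pr_Q$ is used to replace $b=\pi_Q\circ\mathcal{C}$ by a genuinely fiber-preserving $b'$ over $B$, which is what makes the final $\Psi$ a map over $B$.
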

 
 Note that if we let $g = \Psi_{|E_P \times\{1\}}$. Then $g$ is fiber-preserving  homotopic to $f$ and $g^{-1}(E_Q) = N$.\\
\begin{proof} We divide the proof into $3$ steps,



%

{\bf Step 1:}   Goal: Homotope the map $W \stackrel{a: = \pi_{P} \circ \mathcal{C}}{\xrightarrow{\hspace*{1.2cm}}} E_P$ to an embedding over $B$. 

By assumption, we have
$$W \stackrel{\mathcal{C}}{\to} E(f,i_Q) \subseteq E_P\times E_M^{I} \times E_Q.$$

and we also have maps

\begin{center}

$W \stackrel{a: = \pi_{P} \circ \mathcal{C}}{\xrightarrow{\hspace*{1.2cm}}} E_P\ \ , \ \ W\stackrel{b:= \pi_{Q} \circ \mathcal{C}}{\xrightarrow{\hspace*{1.2cm}}} E_Q \ \ , \ \  W\times I \stackrel{H}{\to} E_M$  
\end{center}

where $H :=  K \circ (\mathcal{C} \times id_I)$, so $H_{|W\times 0} =  f\circ a ,  H_{|W \times 1} = b$.

Recall that  $\partial W = f^{-1}(E_Q) \sqcup N \ \ , \ \ a_{|\partial W}$ is just the inclusion of $f^{-1}(E_Q)$ and $N$ into $E_P$.



Apply the condition $\displaystyle m > q + \frac{p+k}{2} + 1$ to  Lemma $ \ref{relemb}$ , there exists an embedding  $\mathcal{A} \simeq a$ (rel $\partial W$), i.e. we have a commutative diagram

\begin{equation}
\xymatrix{W\times 1 \ar@{^{(}->}[d]  \ar@/^/[dr] ^{\mathcal{A}} & \\ W \times I \ar[r]^{L}  & E_P\\ W\times  0  \ar@{_{(}->}[u] \ar@/_/ [ur]_{a} & }
\end{equation}
  
We have a map  $W\stackrel{b:= \pi_{Q} \circ \mathcal{C}}{\xrightarrow{\hspace*{1.2cm}}} E_Q$. By concatenating the homotopy $H$ and $f\circ L$ together, we get a commutative diagram

$$\xymatrix{W\times 2 \ar@{^{(}->}[d] \ar[r]^{b} &E_{Q} \ar@{_{(}->}[d]^{i_{Q}} \\ W\times [1,2] \ar[r]^{V} &E_{M}\\ W\times 1 \ar@{_{(}->}[u] \ar[r]^{\mathcal{A}} &E_{P}\ar[u]_{f}}$$

Thus the following diagram commutes up to homotopy 

$$\xymatrix{W \ar[r]^b \ar@{^{(}->}[d]^{\mathcal{A}} &E_Q \ar@{^{(}->}[d]^{i_Q} \\ E_P \ar[r]^f &E_M}$$

Next, we want to modify the homotopy $V$ such that it is fiber preserving with respect to $pr_{M}$.


Note that we have a commutative diagram

\begin{equation}
\xymatrix{W\times 2 \ar[r]^b \ar@{^{(}->}[d] &E_Q \ar[d] ^{pr_Q}\\ W \times [1,2] \ar@{-->}[ur]^{V'} \ar[r]^<<<<<{pr_M \circ V} &B}
\end{equation}

We can apply the homotopy lifting property for $pr_Q$ to get a homotopy  of $b$ to $b'$ through $V'$ such that the following diagram commute

\begin{equation}
\label{con1}
\xymatrix{ W\times 2 \ar[r]^{b' := V'_{W\times1}} \ar@{^{(}->}[d] & E_Q \ar[dr]^{pr_Q} \ar@{^{(}->}[d]^{i_Q}&\\ W \times [1,2] \ar[r]^>>>>{V'} &E_Q \subseteq E_M \ar[r]^>>>>>{pr_M} &B\\  W \times 1 \ar@{_{(}->}[u]  \ar[r]^{\mathcal{A}} &E_P\ar[u]_{f}  \ar[u] _f  \ar[ur]_{pr_P} }
\end{equation}



Let  $\Psi_{W} := b'  :W \to E_Q$. Then $\Psi_{W}$ is a bundle map over $B$ through the lifting $V'$. 
 



{\bf Step 2} Goal: Construct a bundle isomorphism 
\begin{center}
$\nu(\mathcal{A}) \oplus \epsilon^1  \cong b'^{*}(\nu_{E_Q \subseteq E_M})$
\end{center}

where $\epsilon^1$ is the trivial bundle.

Since dim$W < $ rank $\nu(\mathcal{A})$, it is enough to give a stable equivalence between such bundles. 

Now, we have
\begin{equation}
\label{norm1}
W \stackrel{\mathcal{A}}{\hookrightarrow} E_P \ \subseteq S^{p+k+l} \Longrightarrow  
\nu_{W \subseteq S^{p+k+l}}   \cong  \nu(\mathcal{A}) \oplus \mathcal{A}^*(\nu_{E_P\subseteq S^{p+k+l} }) .
\end{equation}

We also have a commutative diagram

\begin{equation}
 \xymatrix{W \ar@/^2pc/[1,2]^{b'} \ar[1,1]^>>{\mathcal{C}} \ar@/_2pc/[2,1]_{\mathcal{A}}  & &\\  & E(f,i_Q) \ar[r]^{\pi_{Q}} \ar[d]_{\pi_{P}} & E_Q \\  &E_P&}
 \end{equation}

 

\begin{align}
\mathcal{A} \simeq a = \pi_P \circ \mathcal{C} \ \ &\Longrightarrow \mathcal{A}^{*}(\nu_{E_P \subseteq S^{p+k+l}}) \cong (\pi_P \circ \mathcal{C})^*(\nu_{E_P \subseteq S^{p+k+l}}).\label{norm2}\\
 b' \simeq b = \pi_Q \circ \mathcal{C} \ \ &\Longrightarrow b'^*(\nu_{E_Q\subseteq E_M}) \cong (\pi_Q \circ \mathcal{C})^*(\nu_{E_Q\subseteq E_M}).\label{norm3}
\end{align}


\noindent Thus the bundle map $\hat{\mathcal{C}}: \nu_{W \subseteq S^{p+k+l}\times I} \to \xi = \pi^*_P(\nu_{E_P\subseteq S^{p+k+l}}) \oplus \pi^*_Q(\nu_{E_Q\subseteq E_M})$ yields the following stable isomorphism
\begin{equation}
\label{norm4}
\nu_{W \subseteq S^{p+k+l}} \oplus  \epsilon^1 \stackrel{stable}{\cong} (\pi_{P} \circ \mathcal{C})^*(\nu_{E_P \subseteq S^{p+k+l}})  \oplus  (\pi_Q \circ \mathcal{C})^*(\nu_{E_Q \subseteq E_M}).
\end{equation}

\noindent Putting \eqref{norm1}, \eqref{norm2} \eqref{norm3}and \eqref{norm4} together, we get  
\begin{equation}
\nu(\mathcal{A})  \oplus (\pi_P \circ \mathcal{C})^*(\nu_{E_P \subseteq S^{p+k+l}})\oplus \epsilon^1 \stackrel{stable}{\cong} (\pi_{P} \circ \mathcal{C})^*(\nu_{E_P \subseteq S^{p+k+l}}) \oplus  b'^*(\nu_{E_Q \subseteq E_M}).
\end{equation}

\noindent Consequently, we have 
\begin{equation}
\label{stable}
\nu(\mathcal{A}) \oplus \epsilon^1  \cong b'^{*}(\nu_{E_Q \subseteq E_M}).
\end{equation}




%
\noindent This implies that we did construct a bundle map
\begin{equation}
\label{**}
\xymatrix{\nu(\mathcal{A} ) \oplus \epsilon^1\ar[r]^{\hat{b'}} \ar[d] & \nu_{E_Q \subseteq E_M} \ar[d] \\ W\ar[r]^{b' }  &E_Q}
\end{equation}
\noindent which give us the extension of map $b'$ to the tubular neighborhood of $W$ in $E_P$. More precisely, 

$$\Psi_T: D(\nu(\mathcal{A})) \hookrightarrow D(\nu(\mathcal{A}) \oplus \epsilon^1) \stackrel{\hat{b'}}{\to} D(\nu_{E_Q\subseteq E_M})$$
where $D$ denotes the disc bundle.Note that $\Psi_{T} \pitchfork E_Q$ and $\Psi_{T}(\partial D(\eta_1)) \subseteq E_M \setminus E_Q$. 

Since $\nu(\mathcal{A}) \oplus \epsilon^1  \cong b'^{*}(\nu_{E_Q \subseteq E_M})$, we can find a subbundle $\eta_2$ of $ b'^{*}(\nu_{E_Q \subseteq E_M})$ such that $\eta_2 \cong \nu(\mathcal{A}).$ For simplicity, let $\eta_1 := \nu(\mathcal{A})$.

\noindent {\bf Step 3.} Goal: Construct the smooth map $\Psi:E_P\times I \to E_M$ over $B$.





Recall that we have 
$$W \hookrightarrow D(\nu(\mathcal{A})) \simeq D(\nu(\mathcal{A}) \oplus \epsilon^1) \cong D(b'^*(\nu_{E_Q \subseteq E_M})).$$

Then there exists a neighborhood $\bar{D}$ of $W$ in  $D(b'^*(\nu_{E_Q \subseteq E_M}))$ such that $\bar{D} \simeq D(b'^*(\nu_{E_Q \subseteq E_M}))$ and $\bar{D} \cong D(\nu(\mathcal{A})) $.



According to $\eqref{con1}$, we have a commutative diagram

\begin{equation}
\xymatrix{W \times 2 \ar@{^{(}->}[d]  \ar[rr]^{V'_{|W \times 2}} && E_M\times 2 \ar@{_{(}->}[d] \\ W \times [1,2] \ar[rr]^{\Psi_1}&  & E_M \times [1,2] \\ W\times  1  \ar@{_{(}->}[u] \ar[rr]_{V'_{|W\times 1}} &&E_M \times 1 \ar@{^{(}->}[u] }
\end{equation}

where $\Psi_1(w,t) =  (V'(w,t), t)$.


Let $\eta_1 =  \nu(\mathcal{A} \oplus \epsilon^1), \eta_2 = b^*(\nu_{E_Q\subseteq E_M})$.

According to   $\eqref{**}$, there exists a bundle $\eta$ over $W \times I$ such that $\eta_{|W \times i} = \eta_i$  for $ i=1,2$ 

Let $D_1:= D(\nu(\mathcal{A}))$, $D_2 := \bar{D}$.

Then $D_i \hookrightarrow D(\eta_i)$ is a homotopy equivalence  for $i=1,2$ and also $D_1 \cong D_2$. 

Since $D_1\cup W\times [1,2] \cup D_2  \hookrightarrow D_1 \times [1,2]$ is a cofibration and a homotopy equivalence, there exist an extension $D_1 \times [1,2] \stackrel{\hat{\Psi}_1}{\to} E_M\times [1,2]$ such that the following diagram commutes

\begin{equation}
\xymatrix{D_2 \ar@{^{(}->}[d]  \ar[rr] && D(\nu_{E_Q\subseteq E_M}) \ar@{^{(}->}[r] ^{ \text{exp}} &E_M\times 2 \ar@{_{(}->}[d] \\ D_1 \times [1,2] \ar[rrr]^{\hat{\Psi}_1}& & & E_M\times [1,2] \\ D_1   \ar@{_{(}->}[u] \ar[rrr]^{f_{|D_1}} && &E_M\cong E_M\times 1 \ar@{^{(}->}[u] }
\end{equation}

Next we want to construct an embedding $W \stackrel{\mathcal{A}'}{\hookrightarrow}D_2 \times [2,3]$ such that the following hold:

\begin{itemize} 
\item[({\it i})] $\mathcal{A}'(W) \cap \{D_2 \times 2\} = f^{-1}(E_Q)$
\item[({\it ii})] $\mathcal{A}'(W) \cap \{D_2 \times 3\} = N$
\item[({\it iii})] $\mathcal{A}' \pitchfork D_2 \times \partial [2,3]$
\end{itemize}

We  start by letting $\alpha:W \to [2,3]$ be a smooth map such that $\alpha \pitchfork \partial[2,3]$, $\alpha^{-1}(2) = f^{-1}(E_Q)$ and $\alpha^{-1}(3) = N$, we also have an inclusion $W \stackrel{i_W}{\hookrightarrow} D_2$. 

Let $\mathcal{A}' := i_W \times \alpha$. Then $\mathcal{A}'$ is such a required map.  



By the construction, we have  $$D(\nu(\mathcal{A}')) \cong D_2 \times [2,3].$$

Let $\psi_2$ be the composition of the maps  $$W \stackrel{\mathcal{A}'}{\hookrightarrow} D_2 \times [2,3]  \stackrel{{\Psi_T}_{|D_2}  \times id_{[2,3]}}{\xrightarrow{\hspace*{1.5cm}}} M\times [2,3] \stackrel{proj}{\to} M $$

Define a map $\Psi_2:= \psi_2 \times \alpha: W  \to E_M\times [2,3]$.

Using the fact that $D(\nu(\mathcal{A}')) \cong D_2 \times [2,3]$, then  $D_2 \times \partial[2,3] \cup \mathcal{A}'(W)  \hookrightarrow D_2 \times [2,3]$ is a cofibration and homotopy equivalence. Hence there exist an extension $D_2 \times [2,3] \stackrel{\hat{\Psi}_2}{\to} E_M\times [2,3]$.

Note that for $(x,3) \in D_2 \times 3$ such that $\hat{\Psi}_2(x,3)  \in E_Q \times 3$, the map  $\hat{\Psi}_{2|D_2 \times 3} = \Psi_T $ forces that $x$ has to be in $W$, so by the definition of $\Psi_2$ implies $x\in N$. Thus

$\hat{\Psi}_{2|D_2 \times 3}^{-1}(E_Q) = N$.

We define a map $\displaystyle \tilde{\Psi}: \{ E_P \times I \}\cup \{D_1 \times [1, 2] \} \cup \{ D_2 \times [2,3] \}   \to E_M\times [0,3]$ by  
\begin{equation}
\tilde{\Psi}(p,t) =\left \{
\begin{array}{rl}
(f(p),t)  &\text{if } t \in [0,1]\\
(\hat{\Psi}_1(p), t) &\text{if } t \in [1,2]\\
(\hat{\Psi}_2(p), t) &\text{if } t \in [2,3].
\end{array} \right.
\end{equation}

Then $\tilde{\Psi}$ is well-defined map over $B$ by the construction.

It's not hard to see that  $ \{ E_P \times I \}\cup \{D_1 \times [1, 2] \} \cup \{ D_2 \times [2,3] \} $ is diffeomorphic to $E_P \times I$ . Define the map $\Psi$ to be the composition of maps $$ E_P \times I  \stackrel{\cong}{\to } \{ E_P \times I \}\cup \{D_1 \times [1, 2] \} \cup \{ D_2 \times [2,3] \} \stackrel{\tilde{\Psi}}{\to} E_M\times [0,3] \stackrel{proj}{\to} E_M$$
where $proj$ is the projection to the first factor.

Thus , we get a map $\Psi:E_P \times I \to E_M$ over $B$ so that $\Psi_{|E_P\times 0} = f$. By construction, $\Psi \pitchfork E_Q$ and $\Psi_{|E_P\times 1}^{-1}(E_Q) = N$ as required.
 
\end{proof} 

\begin{corollary} 
\label{obstruction}
  Assume $\displaystyle m > q + (\frac{p+k}{2}) +1$. Then we can fiber-preserving homotope a map $f$ to the map that its image does not intersect  $ E_Q $ if and only if 
 $[c,\hat{c}]  = 0 \in \Omega^{fr}_{p+q+k-m}(E(f,i_{Q});  \xi)$.

\end{corollary}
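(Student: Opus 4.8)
The plan is to prove Corollary \ref{obstruction} as a straightforward consequence of Theorem \ref{main}, treating the two directions of the biconditional separately.

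\textbf{The easy direction ($\Leftarrow$).} Suppose $[c,\hat c]=0$ in $\Omega^{fr}_{p+q+k-m}(E(f,i_Q);\xi)$. The vanishing of this bordism class means that the pair $(c,\hat c)$ is normally bordant to the empty representative, so in the setup of \textbf{(II)} we may take $N=\emptyset$ (with $N^{p+q+k-m}\subseteq E_P^{p+k}\subseteq S^{p+k+l}$ the empty submanifold, and $W$ the bordism from $f^{-1}(E_Q)$ to $\emptyset$). Applying Theorem \ref{main} with this choice of $N$, and using the dimension hypothesis $m>q+\frac{p+k}{2}+1$, we obtain a fiber-preserving homotopy $\Psi:E_P\times I\to E_M$ with $\Psi_{|E_P\times\{0\}}=f$, $\Psi\pitchfork E_Q$, and $\Psi_{|E_P\times\{1\}}^{-1}(E_Q)=N=\emptyset$. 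Setting $g=\Psi_{|E_P\times\{1\}}$ gives a map fiber-preserving homotopic to $f$ with $g(E_P)\cap E_Q=\emptyset$, which is exactly the conclusion.

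\textbf{The converse direction ($\Rightarrow$).} Suppose $g$ is fiber-preserving homotopic to $f$ with $g(E_P)\cap E_Q=\emptyset$. First I would observe that the bordism class $[c,\hat c]\in\Omega^{fr}_{p+q+k-m}(E(f,i_Q);\xi)$ is an invariant of $f$ up to fiber-preserving homotopy: a homotopy $F:E_P\times I\to E_M$ from $f$ to $g$ (which we may assume transverse to $E_Q$) produces, via the homotopy pullback construction applied fiberwise over $I$, a normal bordism in $E(f,i_Q)$ (using that $E(F_t,i_Q)\simeq E(f,i_Q)$ for all $t$) between the representative coming from $f$ and the one coming from $g$. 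Since $g\pitchfork E_Q$ trivially with $g^{-1}(E_Q)=\emptyset$, the representative $(c_g,\hat c_g)$ built from $g$ has empty source, hence represents $0$. Therefore $[c,\hat c]=[c_g,\hat c_g]=0$.

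\textbf{Main obstacle.} I expect the routine part to be writing out the $(\Leftarrow)$ direction, since it is essentially a direct invocation of Theorem \ref{main} with $N=\emptyset$; one only needs to check that all the constructions in \textbf{(II)} and in the proof of Theorem \ref{main} degenerate gracefully when $N$ and the $t=1$ end of $W$ are empty (the tubular neighborhoods $D_2\times[2,3]$, the map $\alpha$ with $\alpha^{-1}(3)=\emptyset$, etc.\ all still make sense). The genuinely delicate point is the homotopy-invariance claim used in $(\Rightarrow)$: one must verify that a fiber-preserving homotopy from $f$ to $g$ induces a bona fide normal bordism of framed data in $E(f,i_Q)$ — this requires identifying the homotopy pullbacks along the homotopy and checking that the bundle $\xi$ and the framings are carried along coherently. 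This is where I would spend the most care, likely citing the analogous statement from the classical theory in \cite{Gsun} or \cite{H-Q} and transporting it fiberwise.
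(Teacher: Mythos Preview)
Your proposal is correct and matches the paper's intent: the paper states the corollary with no proof at all, treating it as an immediate consequence of Theorem~\ref{main}, and your $(\Leftarrow)$ direction (apply Theorem~\ref{main} with $N=\emptyset$) together with the standard homotopy-invariance argument for $(\Rightarrow)$ is exactly the expected unpacking. The only thing to note is that the paper leaves the $(\Rightarrow)$ direction entirely implicit, so your write-up is already more detailed than the original; the care you propose to take with the bordism-invariance of $[c,\hat c]$ under fiber-preserving homotopy is appropriate and is indeed the one point requiring a sentence of justification.
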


\section{Application to fixed point theory.}
Let $p:M^{m+k} \to B^k$ be a smooth fiber bundle with compact fibers and $k>2$. Assume that $B$ is a closed manifold. Let   $f:M \to M$  be a smooth map over $B$, i.e. $p\circ f = p$. 

\noindent The fixed point set of $f$ is  $$Fix(f) := \{ x \in M \ \ | \ \ f(x) = x \}.$$

We have a homotopy pull-back diagram

\begin{equation}
\xymatrix{ \mathcal{L}_fM \ar[r]^{ev_0} \ar[d]_{ev_1} &M \ar[d]^{\triangle} \\ M \ar[r]_>>>>{\triangle_f}
& M \times_B M}
\end{equation}

\noindent where
\begin{itemize}
\item[({\it i })] $\mathcal{L}_fM : = \{ \alpha \in M^I \ \ | \ \ f(\alpha(0)) = \alpha(1) \}$,
\item[({\it ii})] $ev_0$ and $ev_1$ are the evaluation map at $0$ and $1$ respectively,
\item[({\it iii})] $\triangle : =$ the diagonal map, defined by $x \mapsto (x,x)$,
\item[({\it iv})] $\triangle_f :=$ the twisted diagonal map, defined by $x \mapsto (x,f(x))$,
\item[({\it v})] $M \times_B M:=$ the fiber bundle over $B$ with fiber over $b\in B$, given by $F_b \times F_b$ where $F_b$ is the fiber of $p$ over $b$.
\end{itemize}

\newpage

\begin{proposition}{}
\label{triangle}
There exists a homotopy from $f$ to $f_1$ such that $\triangle_{f_1} \pitchfork \triangle$.
\end{proposition}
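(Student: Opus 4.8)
The plan is to make the twisted diagonal $\triangle_f : M \to M\times_B M$ transverse to the diagonal $\triangle : M \to M\times_B M$ by a small fiber-preserving perturbation of $f$, using the standard parametric transversality (Thom) argument adapted to the bundle setting over $B$. The key point is that transversality is a local condition along $Fix(f)$, and near a fixed point we can trivialize the bundle $p$ and work in a coordinate chart $U\times F$, $U\subseteq B$, $F$ the fiber.

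First I would observe that $\triangle_f \pitchfork \triangle$ is equivalent to the following: for every $x\in M$ with $f(x)=x$, the map $d(\triangle_f)_x$ together with the tangent space to the diagonal spans $T_{(x,x)}(M\times_B M)$; equivalently, writing things in a local fiber chart, the fiber-differential $\mathrm{id} - d_F f_b|_x : T_xF_b \to T_xF_b$ is required to have image complementary to $0$, i.e.\ $1$ is not an eigenvalue of the fiberwise derivative of $f$ at any fixed point. So the goal is to perturb $f$, through maps over $B$, so that at each fixed point the fiberwise derivative does not have eigenvalue $1$. Second, I would set up the parametric family: locally near a point of $Fix(f)$, choose a bundle trivialization $p^{-1}(U)\cong U\times F$ and an embedding $F\hookrightarrow \mathbb{R}^{\,n}$ with tubular neighborhood, and consider the perturbations $f_v(b,y) = \rho(b,y)\cdot\big(f(b,y) + v\big)$ for $v$ in a small ball in $\mathbb{R}^{\,n}$ (reprojected to $F$), where $\rho$ is a bump function supported in the chart; this is a smooth family of maps over $B$, with $f_0=f$ on that chart. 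Third, I would show the associated evaluation map $(b,y,v)\mapsto \triangle_{f_v}(b,y)$ is transverse to $\triangle$ — this is where the translations $v$ supply the needed surjectivity onto the normal directions of $\triangle$ inside $M\times_B M$ — and then apply the parametric transversality theorem to conclude that for almost every $v$, $\triangle_{f_v}\pitchfork\triangle$ on that chart. Finally, I would cover the compact set $Fix(f)$ (compact since $M$ has compact fibers and $B$ is closed) by finitely many such charts and patch the perturbations one chart at a time, each time choosing the perturbation parameter small enough not to destroy transversality already achieved on previous charts and small enough not to create new fixed points outside a fixed neighborhood of $Fix(f)$; the resulting $f_1$ is homotopic to $f$ over $B$ (the homotopy being $t\mapsto f_{tv}$, concatenated over the charts) and satisfies $\triangle_{f_1}\pitchfork\triangle$.

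The main obstacle I expect is the bookkeeping in the patching step: ensuring that perturbing on a later chart does not reintroduce an eigenvalue-$1$ fixed point on an earlier chart, and that no fixed points escape the region where we have control. This is handled by the usual compactness-plus-smallness argument — transversality is an open condition, $Fix(f)$ is compact, and each perturbation can be taken arbitrarily $C^1$-small — but it requires care to state precisely. A secondary technical point is checking that the translation perturbations indeed move the fiberwise derivative enough to kill the eigenvalue $1$; this reduces to the elementary fact that the subset of matrices having $1$ as an eigenvalue has positive codimension, so a generic affine perturbation avoids it, which is exactly what parametric transversality packages for us.
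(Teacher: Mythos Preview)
Your approach is correct in outline but takes a different route from the paper. Rather than carrying out a hands-on parametric transversality argument, the paper simply invokes Kozniewski's fiberwise transversality lemma (stated there as Lemma~\ref{Kozinowski}): for $B$-manifolds $X,Y$, a $B$-submanifold $Z\subseteq Y$, and a $B$-map $g:X\to Y$, there is a fiber-preserving homotopy from $g$ to a map transverse to $Z$. One applies this with $g=\triangle_f:M\to M\times_B M$ and $Z=\triangle(M)$, regarding $M\times_B M$ as fibered over $M$ via the first projection so that the resulting homotopy stays of the form $\triangle_{f_t}$. What you propose is, in effect, a direct proof of this special case of Kozniewski's lemma via local translation families and Thom's parametric transversality theorem; your argument is more self-contained, while the paper's is a one-line citation. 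Two minor corrections to your write-up: first, the condition ``$1$ is not an eigenvalue of $d_Ff_b$ at $x$'' is sufficient for $\triangle_f\pitchfork\triangle$ at $x$ but not literally equivalent to it, since horizontal derivatives of $f$ can also contribute to spanning the normal of $\triangle$ in $M\times_B M$ --- this does not affect your argument, as you only need a sufficient target condition; second, in your step~3 it is the surjectivity of $\partial_v f_v$ onto the fiber tangent space (coming from the tubular retraction), not any change in the derivative of $f$ itself, that makes the evaluation map transverse to $\triangle$, so the eigenvalue heuristic in your first step is somewhat beside the point once you run Thom's theorem.
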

 
 The proof relies on the work of Kozniowski \cite{Koz}, relating to $B$-manifolds. Let $B$ be a smooth manifold. A $B$-manifold is a manifold $X$ together with a locally trivial submersion $p:X \to B$. A $B$-map is a smooth fiber-preserving map.
 \begin{lemma}{}
 \label{Kozinowski}
Let $X$ and $Y$ be $B$-manifolds and $Z$ be a $B$-submanifold of $Y$. Let $g:X \to Y$ be a $B$-map.Then there is a fiber-preserving smooth $B$-homotopy $H_t:X \to Y$ such that $H_0= g$ and $H_1 \pitchfork Z$.
\end{lemma}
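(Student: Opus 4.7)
The plan is to establish a fiberwise Thom transversality theorem in three stages: embed the target into a $B$-vector bundle, construct a finite-dimensional family of vertical perturbations of $g$, and apply the parametric Sard theorem to pick out a member of the family that is transverse to $Z$. Throughout, all perturbations are kept vertical (along the fibers of $p_Y : Y \to B$) so that every map in the family remains a $B$-map, giving automatically a fiber-preserving $B$-homotopy at the end.

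For the setup, I would fix a fiberwise $B$-embedding $Y \hookrightarrow E$ into a $B$-vector bundle $E \to B$ (locally of the form $B \times \mathbb{R}^N$ and glued by a partition of unity on $B$), together with a $B$-tubular neighborhood $V \subseteq E$ of $Y$ and an associated smooth $B$-retraction $r : V \to Y$. For a vertical section $\sigma : X \to E$ (meaning $\sigma(x) \in E_{p_X(x)}$) of small enough norm that $g + \sigma$ takes values in $V$, define $g_\sigma := r \circ (g + \sigma)$; this is a $B$-map, and the straight-line interpolation $t \mapsto r \circ (g + t\sigma)$ is a smooth $B$-homotopy from $g$ to $g_\sigma$. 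Next, choose a finite-dimensional linear space $\Gamma$ of smooth vertical sections $X \to E$ such that for every $x \in X$ the evaluation map $\Gamma \to E_{p_X(x)}$, $\sigma \mapsto \sigma(x)$, is surjective. Such a $\Gamma$ is assembled from finitely many local bump sections (spanning the fibers of $E$ over trivializing charts in $B$) glued via a partition of unity on $B$.

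With this data, consider the universal map $F : X \times \Gamma \to Y$ defined by $F(x,\sigma) := g_\sigma(x)$. The derivative of $F$ in the $\Gamma$-direction surjects onto the vertical tangent space $T^v_{F(x,\sigma)} Y$, by construction of $\Gamma$ together with the fact that $dr$ restricted to vertical vectors along $Y$ is the identity. Since for $B$-maps into a $B$-submanifold the ordinary and fiberwise transversality conditions coincide (both reduce to a vertical surjectivity statement, the horizontal directions being matched automatically by $p_X = p_Y \circ g_\sigma$), this vertical surjectivity already yields $F \pitchfork Z$. Parametric transversality (Sard applied to the projection $F^{-1}(Z) \to \Gamma$) then produces a full-measure set of $\sigma \in \Gamma$ for which $g_\sigma \pitchfork Z$; choosing such a $\sigma$ of small norm and setting $H_t := r \circ (g + t\sigma)$ finishes the argument.

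The main point requiring care is the construction of $\Gamma$ with the requisite pointwise surjectivity when $B$ or the fibers of $X$ fail to be compact. There, one replaces a single finite-dimensional $\Gamma$ by a locally finite sequence $\{\Gamma_i\}$ supported over a locally finite trivializing cover of $B$, selects perturbations one at a time inductively (each time on an open set where transversality has not yet been achieved), and patches through a partition of unity on $B$ so that earlier transversality is preserved. This is standard bookkeeping rather than a new idea, but it is the only step where the argument becomes technically delicate.
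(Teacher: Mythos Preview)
Your argument is the standard parametric (Thom) transversality proof, correctly adapted to the fibered setting by restricting to vertical perturbations and using a $B$-retraction so that every perturbed map stays a $B$-map; the reduction of ordinary transversality to a vertical surjectivity condition is valid because both $p_X$ and $p_Y|_Z$ are submersions, so the horizontal directions are already covered by $dg$ alone. The only points that would need to be written out carefully are exactly the ones you flag: existence of a fiberwise closed embedding $Y\hookrightarrow E$ with a $B$-tubular neighborhood and $B$-retraction, and the locally finite inductive patching when $B$ or the fibers are noncompact.

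There is essentially nothing to compare against in the paper itself: the paper does not give a proof of this lemma but simply refers the reader to Coufal's thesis \cite{Cou} (the statement being attributed to Kozniewski \cite{Koz}). So your write-up supplies what the paper omits. What the citation buys is brevity and an appeal to an established source; what your approach buys is a self-contained argument that makes transparent why the homotopy can be taken fiber-preserving, namely because every perturbation used is vertical from the outset.
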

 
\begin{proof} See \cite{Cou} for the proof.
\end{proof}

We have a transversal (pullback) square

\begin{equation}
\xymatrix{ Fix(f_1) \ar[r]^i \ar[d]_i &M \ar[d]^{\triangle}\\ M \ar[r]_>>>>>{\triangle_{f_1}} &M\times M}
\end{equation}

\noindent where $i$ is the inclusion. Transversality yields that $\nu(i) \cong i^*(\nu(\triangle)) \cong i^*(\tau M)$.

Choose an embedding $M \hookrightarrow S^{m+k}$ for sufficiently large $k$. Then we have 
$$\nu_{Fix(f_1) \subseteq S^{m+k}} \cong \nu(i) \oplus i^*(\nu_{M\subseteq S^{m+k}}) \cong i^*(\tau M) \oplus  i^*(\nu_{M\subseteq S^{p+k}}) \cong \epsilon.$$

\noindent We denote this bundle isomorphism by  $\nu_{Fix(f_1) \subseteq S^{m+k}}  \stackrel{\hat{g}}{\to} \epsilon$. We also have a map $Fix(f_1) \stackrel{g}{\to} \mathcal{L}_fM$ defined by $x \mapsto c_x$, where $c_x$ is the constant map at $x$. Thus $L^{bord}(f) := [Fix(f_1),g,\hat{g}]$ determines the element in $\Omega^{fr}_k(\mathcal{L}_fM ; \epsilon)$.

 Applying Theorem \ref{main}, We obtain the following corollary;

\begin{corollary}{(Converse of fiberwise Lefschetz fixed point theorem)}
\label{fixedmain}

Let  $f:M^{m+k}\to M^{m+k}$ be a smooth bundle map over the closed manifold $B^k$.  Assume that $m \geq k+3$. Then  $f$ is fiber homotopic to a fixed point free map if and only if $L^{bord}(f) = 0 \in \Omega^{fr}_k(\mathcal{L}_fM ; \epsilon).$
\end{corollary}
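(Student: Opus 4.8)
The plan is to deduce Corollary \ref{fixedmain} from Theorem \ref{main} by recognizing the fiberwise fixed point problem as a special case of the fiberwise intersection problem, exactly parallel to the way Corollary \ref{obstruction} follows from Theorem \ref{main}. First I would set up the dictionary: take $E_P = M$, $E_M = M\times_B M$, $E_Q = \triangle(M)\cong M$ the diagonal subbundle, and $f$ replaced by the twisted diagonal $\triangle_{f}: M \to M\times_B M$. By Proposition \ref{triangle} (via Lemma \ref{Kozinowski}) we may first homotope $f$ to $f_1$ so that $\triangle_{f_1}\pitchfork\triangle$, so the transversality hypothesis of the theory is met, and $\triangle_{f_1}^{-1}(\triangle(M)) = Fix(f_1)$. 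The homotopy pullback $E(\triangle_{f}, i_\triangle)$ is precisely $\mathcal{L}_fM$ (the diagram in the text exhibiting $\mathcal{L}_fM$ as the homotopy pullback of $\triangle_f$ and $\triangle$), and the Thom-class twisting bundle $\xi$ specializes to the trivial bundle $\epsilon$ by the stable triviality computation $\nu_{Fix(f_1)\subseteq S^{m+k}}\cong i^*(\tau M)\oplus i^*(\nu_{M\subseteq S^{m+k}})\cong\epsilon$ already carried out in the excerpt. Under this dictionary the obstruction class $[c,\hat c]\in\Omega^{fr}_{p+q+k-m}(E(\triangle_f,i_\triangle);\xi)$ becomes exactly $L^{bord}(f) = [Fix(f_1),g,\hat g]\in\Omega^{fr}_k(\mathcal{L}_fM;\epsilon)$, since in our situation $p = m+k$, $q = m+k$ (the fiber dimension of $M\times_B M$ is $2m+k$, so $m$ in the theorem's notation is $2m+k$... ) — here I would be careful to recompute the index: the ambient fiber has dimension $2m+k$, each of domain and diagonal has fiber dimension $m+k$, so the expected intersection dimension is $(m+k)+(m+k)-(2m+k) = k$, matching $\Omega^{fr}_k$.

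Next I would check the dimension hypothesis of Theorem \ref{main}. In the theorem's notation, with the roles above, one needs (ambient fiber dim) $>$ (submanifold fiber dim) $+\,\tfrac12$(domain fiber dim) $+1$, i.e. $2m+k > (m+k) + \tfrac{m+k}{2} + 1$, which simplifies to $m > \tfrac{m+k}{2}+1$, i.e. $m > k+2$, i.e. $m\geq k+3$. This is exactly the hypothesis stated in Corollary \ref{fixedmain}, so Theorem \ref{main} applies verbatim. I would also invoke the fiberwise analogue of Corollary \ref{obstruction}: the ``if'' direction is the substance of Theorem \ref{main} (when $L^{bord}(f)=0$ there is a normal bordism $W$ from $Fix(f_1)$ to the empty manifold $N=\emptyset$, and the theorem produces a fiber-preserving homotopy $\Psi$ with $\Psi_{|M\times 1}^{-1}(\triangle(M)) = \emptyset$, i.e. a fixed-point-free map fiber homotopic to $f$); the ``only if'' direction is immediate since $L^{bord}$ is a fiberwise normal bordism invariant — if $f\simeq_B f'$ with $Fix(f') = \emptyset$ then the class is represented by the empty manifold, hence is zero.

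The one genuinely non-formal point, and the step I expect to be the main obstacle, is verifying that Theorem \ref{main} can legitimately be applied with the target $N = \emptyset$ and, more delicately, that the class $[c,\hat c]$ attached to the intersection-theory data of $(\triangle_{f_1}, i_\triangle)$ really does coincide (not merely correspond) with $L^{bord}(f)$ as defined in the text — this requires matching the framing $\hat c$ coming from transversality in the general construction with the specific stable framing $\hat g$ obtained from $i^*(\tau M)\oplus i^*\nu_{M\subseteq S^{m+k}}\cong\epsilon$, and checking that the map $c: Fix(f_1)\to E(\triangle_{f_1}, i_\triangle)$ agrees with $g: x\mapsto c_x$ under the identification $E(\triangle_{f_1},i_\triangle)\cong\mathcal{L}_fM$. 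Both are straightforward unwindings of definitions, but they must be done to know that ``$[c,\hat c]=0$'' and ``$L^{bord}(f)=0$'' are literally the same condition. Once these identifications are in place, the corollary is just Theorem \ref{main} plus its converse read through the dictionary, so I would present it compactly: recall the setup, note the index and dimension bookkeeping, cite Theorem \ref{main} for one direction and bordism-invariance for the other, and conclude.
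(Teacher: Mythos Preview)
Your proposal is correct and is exactly the approach the paper has in mind: the paper's ``proof'' is literally the single sentence ``Applying Theorem \ref{main}, we obtain the following corollary,'' and you have supplied the dictionary ($E_P=E_Q=M$, $E_M=M\times_B M$, $\triangle_f$ in the role of $f$, $N=\emptyset$), the identification $E(\triangle_f,i_\triangle)\simeq\mathcal{L}_fM$, the dimension check $m\geq k+3$, and the two directions via Theorem \ref{main} and bordism invariance. One small bookkeeping slip: in the paper's notation $p,q,m$ are \emph{fiber} dimensions (so here $p=q=m$ and the ambient fiber dimension is $2m$, not $2m+k$), and the hypothesis of Theorem \ref{main} reads $m_{\mathrm{thm}}>q+\tfrac{p+k}{2}+1$; your computation used total dimensions and dropped the $+k$, two errors that cancel to give the correct inequality $2m>m+\tfrac{m+k}{2}+1\Leftrightarrow m\geq k+3$, but you should straighten this out before writing it up.
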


\renewcommand\bibname{REFERENCES}
\bibliographystyle{plain}
\bibliography{thesis_master_bib}

\end{document}